\documentclass[11pt,reqno]{amsart}

\usepackage{amssymb,latexsym}
\usepackage{url}
\usepackage[all,cmtip]{xy}
\usepackage[hidelinks]{hyperref}
\usepackage{geometry}

\geometry{
lmargin=35mm,
rmargin=35mm
}

\newcommand{\rr}{\mathbb R}
\newcommand{\pp}{\mathbb P}
\newcommand{\bbT}{\mathbb T}
\newcommand{\calC}{\mathcal C}
\newcommand{\calD}{\mathcal D}

\newcommand{\lra}{\longrightarrow}

\newcommand{\subs}{\subset}

\newcommand{\llangle}{\langle\langle}
\newcommand{\rrangle}{\rangle\rangle}
\newcommand{\ver}{\text{ver}}
\newcommand{\pr}{\text{pr}}

\newtheorem{thm}{Theorem}
\newtheorem{prop}{Proposition}
\newtheorem{lemma}{Lemma}

\theoremstyle{definition}
\newtheorem{definition}{Definition}
\newtheorem{remark}{Remark}
\newtheorem{example}{Example}

\begin{document}

\dedicatory{Dedicated to Hern\'an Cendra}

\title[Discrete Mechanical Systems in a Dirac Setting]{Discrete Mechanical Systems in a Dirac Setting: \\ a Proposal}

\author{Mat\'ias I. Caruso}
\address{Depto. de Matem\'atica \\ Facultad de Ciencias Exactas \\ Universidad Nacional de La Plata \hfill \break
Calles 50 y 115 \\ La Plata \\ Buenos Aires \\ 1900 \\ Argentina \hfill \break
\indent Centro de Matem\'atica de La Plata (CMaLP) \hfill \break
\indent CONICET}
\email{mcaruso@mate.unlp.edu.ar}

\author{Javier Fern\'andez}
\address{Instituto Balseiro \\ Universidad Nacional de Cuyo - C.N.E.A. \hfill \break Av. Bustillo 9500 \\ San Carlos de Bariloche \\ R8400AGP \\ Argentina}
\email{jfernand@ib.edu.ar}

\author{Cora Tori}
\address{Depto. de Ciencias B\'asicas \\ Facultad de Ingenier\'ia \\ Universidad Nacional de La Plata \hfill \break	
Calle 116 entre 47 y 48 \\ La Plata \\ Buenos Aires \\ 1900 \\ Argentina \hfill \break
\indent Centro de Matem\'atica de La Plata (CMaLP)}
\email{cora.tori@ing.unlp.edu.ar}

\author{Marcela Zuccalli}
\address{Depto. de Matem\'atica \\ Facultad de Ciencias Exactas \\ Universidad Nacional de La Plata \hfill \break
Calles 50 y 115 \\ La Plata \\ Buenos Aires \\ 1900 \\ Argentina \hfill \break
\indent Centro de Matem\'atica de La Plata (CMaLP)}
\email{marce@mate.unlp.edu.ar}

\thanks{This research was partially supported by grants from Universidad Nacional de Cuyo (\#06/C567 and \#06/C574), Universidad Nacional de La Plata and CONICET}

\subjclass[2010]{Primary: 70G45; Secondary: 37J05}

\begin{abstract}
In these notes, we present an alternative version of discrete Dirac mechanics using Dirac structures. We first establish a notion of `continuous Dirac system' and then propose a definition of discrete Dirac system, proving that it is possible to recover discrete Lagrangian and Hamiltonian systems as particular cases. We also note that this approach allows for kinematic as well as variational constraints.
\end{abstract}

\bibliographystyle{amsalpha}

\maketitle

\section{Introduction}

The notion of Dirac structure as a generalization of both presymplectic and Poisson structures goes back to the work of T. Courant and A. Weinstein around 1990 (\cite{Courant} and \cite{Courant-W}). The usage of these objects to construct mechanical systems can be traced back to, for example, the Port-Hamiltonian systems considered in \cite{D-vdS}. Years later, J. E. Marsden and H. Yoshimura pursued the idea of using Dirac structures to provide a unified framework to treat Lagrangian and Hamiltonian systems (\cite{Y-MI} and \cite{Y-MII}). They introduced the so-called implicit Lagrangian and Hamiltonian systems, which allowed them to consider degenerate Lagrangians and nonholonomic constraints.

In the discrete setting, analogues of these systems were presented by M. Leok and T. Ohsawa in \cite{Leok-O}, where the authors introduced the concept of `discrete induced Dirac structure'. Using these objects, they defined `discrete Lagrange--Dirac systems' and `discrete nonholonomic Hamiltonian systems', that allowed them to recover discrete Lagrangian systems (as considered in \cite{M-West}) and discrete Hamiltonian systems (as considered in \cite{Lall-W}). These structures, however, are not Dirac structures themselves, and it is a natural question whether it is possible to obtain the same results using an actual Dirac structure. The goal of our work is, then, to propose an alternative version of discrete mechanical systems in the Dirac setting, but making use of Dirac structures. We believe that an advantage of this approach would be to, hopefully, make use of results already known in the theory of Dirac structures and attempt, among other things, to construct a reduction procedure.

To this end, instead of treating Lagrangian and Hamiltonian systems separately, we will first establish a notion of `(continuous) Dirac system' (very much as it is done, for example, in \cite{BL-C-GT-MdD} and \cite{C-E-F}) and then present a discrete version of it. This notion of discrete Dirac system will allow us to recover the results of \cite{Leok-O} discussed in the previous paragraph and will provide a little more flexibility, since it will allow for kinematic as well as variational constraints. This is important because many authors, when dealing with discrete mechanical systems, think of the variational and kinematic constraints as being related. In a sense, in the case of systems coming as discretizations of a continuous system, they traditionally arise as two reflections of a single constraint (see, for example, \cite{C-M}). In the discrete setting, however, the kinematic and variational constraints are entirely different objects and we find it unnatural to assume that they have to be related as an intrinsic part of the system. In that sense, we find that the discrete systems correlate easily with the generalized nonholonomic systems of \cite{C-G}.

The outline of this paper is as follows: in Section 2 we review the basic definitions regarding Dirac structures and establish the definition of Dirac system, and in Section 3 we introduce what we call `discrete Dirac systems', showing how they contain both discrete Lagrangian and Hamiltonian systems as particular cases. To conclude, we discuss some topics we would like to study in the future.

\section{Dirac structures and Dirac systems}

\subsection{Dirac structures}

Following the presentation given in \cite{Y-MI} and \cite{Y-MII}, let us first consider a finite-dimensional vector space $V$. If $V^*$ is the dual space, we have the natural duality $\langle \cdot , \cdot \rangle$. We define the symmetric pairing $\llangle \cdot , \cdot \rrangle$ on $V \oplus V^*$ by
$$\llangle (v,\alpha) , (\bar{v},\bar{\alpha}) \rrangle = \langle \alpha , \bar{v} \rangle + \langle \bar{\alpha} , v \rangle,$$
for $(v,\alpha),(\bar{v},\bar{\alpha}) \in V \oplus V^*$.

\begin{definition}
A {\it Dirac structure} on $V$ is a subspace $D \subs V \oplus V^*$ such that $D = D^\perp$, where $D^\perp$ the orthogonal complement of $D$ with respect to $\llangle \cdot , \cdot \rrangle$.
\end{definition}

\begin{definition}
A {\it Dirac structure} on a manifold $M$ is a subbundle $D$ of the {\it Pontryagin bundle} $\bbT M := TM \oplus T^*M$ such that for each $m \in M$, $D(m) \subs T_mM \oplus T_m^*M$ is a Dirac structure in the above sense.
\end{definition}

\begin{remark}
Strictly speaking, the above definition is that of an {\it almost Dirac structure}. The missing feature is an integrability condition that we will omit, as is often the convention when using these objects in mechanics (see, for example, \cite{BL-C-GT-MdD}, \cite{Y-MI}, \cite{Y-MII}).
\end{remark}

The Pontryagin bundle of a manifold $M$ comes with three natural projections: $\rho_{TM} : \bbT M \lra TM$, $\rho_{T^*M} : \bbT M \lra T^*M$ and $\rho_M : \bbT M \lra M$.

A distribution and a two-form on a manifold induce a Dirac structure on it. This result will be used several times in the following pages, so we state it here.

\begin{thm}[Theorem 2.3, \cite{Y-MI}]\label{induced-Dirac}
Let $M$ be a manifold and let $\Omega$ be a two-form on $M$. Given a distribution $\Delta_M$ on $M$, define the skew-symmetric bilinear form $\Omega_{\Delta_M}$ on $\Delta_M$ by restricting $\Omega$ to $\Delta_M$. For each $m \in M$, let
\begin{equation*}
\begin{split}
D_M(m) := \{ (v_m , \alpha_m) \in T_mM \times T^*_mM &\mid v_m \in \Delta_M(m) \text{ and} \\
& \alpha_m(w_m) = \Omega_{\Delta_M}(m)(v_m,w_m) \ \forall w_m \in \Delta_M(m) \}
\end{split}
\end{equation*}
Then, $D_M \subs TM \oplus T^*M$ is a Dirac structure on $M$ which we will denote $D(\Delta_M,\Omega_{\Delta_M})$.
\end{thm}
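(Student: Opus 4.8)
The plan is to reduce the statement to pointwise linear algebra, since by the definition of a Dirac structure on a manifold it suffices to check that for each $m \in M$ the subspace $D_M(m) \subs T_mM \oplus T_m^*M$ satisfies $D_M(m) = D_M(m)^\perp$, and separately that $D_M$ is a smooth subbundle of $\bbT M$. Fix $m$ and abbreviate $V := T_mM$, $\Delta := \Delta_M(m)$ and $\omega := \Omega_{\Delta_M}(m)$, so that $\omega$ is a skew-symmetric bilinear form on $\Delta$ and
$$D_M(m) = \{(v,\alpha) \in V \oplus V^* \mid v \in \Delta \text{ and } \langle\alpha,w\rangle = \omega(v,w) \ \forall w \in \Delta\}.$$
First I would note that $D_M(m)$ is a linear subspace, which is immediate from the linearity of $\omega$ in its first argument.

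Next I would establish the isotropy inclusion $D_M(m) \subs D_M(m)^\perp$. Taking $(v,\alpha),(\bar v,\bar\alpha) \in D_M(m)$, both $v,\bar v$ lie in $\Delta$, so the defining condition gives $\langle\alpha,\bar v\rangle = \omega(v,\bar v)$ and $\langle\bar\alpha,v\rangle = \omega(\bar v,v)$; hence $\llangle (v,\alpha),(\bar v,\bar\alpha)\rrangle = \omega(v,\bar v) + \omega(\bar v,v) = 0$ by skew-symmetry of $\omega$. This is the step where it matters that we restricted $\Omega$ to $\Delta$ to obtain a skew form.

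For the reverse inclusion I would use a dimension count. Consider the projection $D_M(m) \lra \Delta$, $(v,\alpha) \mapsto v$. It is surjective, because the restriction map $V^* \lra \Delta^*$ is onto and so any $\omega(v,\cdot) \in \Delta^*$ extends to some $\alpha \in V^*$; its kernel is $\{0\} \oplus \Delta^\circ$, where $\Delta^\circ$ is the annihilator of $\Delta$. Writing $n = \dim V$ and $k = \dim\Delta$, this yields $\dim D_M(m) = k + (n-k) = n = \dim V$. Since the pairing $\llangle\cdot,\cdot\rrangle$ is nondegenerate on $V \oplus V^*$, one has $\dim D_M(m) + \dim D_M(m)^\perp = 2n$, so $\dim D_M(m)^\perp = n = \dim D_M(m)$, and combined with the isotropy inclusion this forces $D_M(m) = D_M(m)^\perp$. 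Alternatively, one can prove $D_M(m)^\perp \subs D_M(m)$ directly: testing an element of $D_M(m)^\perp$ against the pairs $(0,\beta)$ with $\beta \in \Delta^\circ$ shows $v \in (\Delta^\circ)^\circ = \Delta$, and testing against the remaining generators recovers the condition $\langle\alpha,\cdot\rangle = \omega(v,\cdot)$ on $\Delta$.

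Finally I would argue that $D_M$ is a genuine subbundle: its rank equals $\dim V = \dim M$ at every point, so it has constant rank, and its smooth dependence on $m$ follows from the smoothness of $\Delta_M$ and $\Omega$. The main obstacle here is not conceptual but a matter of bookkeeping, namely being careful that the constraint $\langle\alpha,\cdot\rangle = \omega(v,\cdot)$ pins down $\alpha$ exactly on $\Delta$ while leaving a $\Delta^\circ$ worth of freedom; this is precisely what makes $\dim D_M(m)$ equal to $\dim V$ regardless of $\dim\Delta$, which is the crux of the whole argument.
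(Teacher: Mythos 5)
Your argument is correct. The paper does not prove this statement itself --- it is imported verbatim as Theorem 2.3 of \cite{Y-MI} --- but your proof (isotropy of $D_M(m)$ from the skew-symmetry of $\Omega_{\Delta_M}$, then the dimension count $\dim D_M(m) = \dim\Delta_M(m) + \dim\Delta_M(m)^\circ = \dim T_mM$ via the projection onto $\Delta_M(m)$, combined with nondegeneracy of $\llangle\cdot,\cdot\rrangle$) is essentially the standard argument given there, and the key observation that the rank of $D_M$ is $\dim M$ independently of the rank of $\Delta_M$ is exactly the right point to emphasize for the subbundle claim.
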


\subsection{Dirac systems}

Inspired by the definitions considered in, for example, \cite{BL-C-GT-MdD} and \cite{C-E-F}, we introduce the following:

\begin{definition}
A {\itshape Dirac system} consists of a triple $(Q,D,\alpha)$, where $Q$ is a smooth manifold, the configuration space, $D$ is a Dirac structure on $\bbT Q$ and $\alpha$ is a $1$-form on $\bbT Q$, usually arising from the energy of the system.
\end{definition}

\begin{definition}
A curve $z(t)$ on $\bbT Q$ is a {\itshape trajectory} of the system $(Q,D,\alpha)$ if it satisfies
\begin{equation}\label{Dirac-cont}
\dot{z}(t) \oplus \alpha(z(t)) \in D(z(t)).
\end{equation}
\end{definition}

As we see next, these systems naturally contain the implicit Lagrangian and Hamiltonian ones considered in \cite{Y-MI} and \cite{Y-MII}.

\begin{example}[Implicit Lagrangian systems]
An implicit Lagrangian system, as introduced in \cite{Y-MI} (see also \cite{Leok-O}, Definition 2.4), consists of a Lagrangian $L : TQ \lra \rr$, a constraint distribution $\Delta_Q \subs TQ$ and a vector field $X$ on $T^*Q$. Locally, setting\footnote{Throughout this paper we will use a rather compact notation when working in coordinates. Equation $X = \dot{q} \frac{\partial}{\partial q} + \dot{p} \frac{\partial}{\partial p}$ should be interpreted as
$$X = \dot{q}^i \frac{\partial}{\partial q^i} + \dot{p}_i \frac{\partial}{\partial p_i},$$
where we are using the convention of summing over repeated indexes.
} $X = \dot{q} \frac{\partial}{\partial q} + \dot{p} \frac{\partial}{\partial p}$, the trajectories of the system are characterized by the equations (\cite{Y-MI}, Proposition 6.3):
$$p = \frac{\partial L}{\partial v}, \quad \dot{q} \in \Delta_Q(q), \quad v = \dot{q}, \quad \dot{p} - \frac{\partial L}{\partial q} \in \Delta_Q^\circ(q),$$
where $\Delta_Q^\circ(q)$ is the annihilator of $\Delta_Q(q)$.

We can present an implicit Lagrangian system as a Dirac system as follows: given a Lagrangian $L : TQ \lra \rr$ and a constraint distribution $\Delta_Q \subs TQ$, consider the Dirac system $(Q,D_\Delta,\alpha_L)$ given by
\begin{itemize}
\item[$\ast$] $D_\Delta := D(\Delta,\omega_P)$, a Dirac structure on $\bbT Q$ induced by $\Delta := (T\rho_Q)^{-1}(\Delta_Q)$ and $\omega_P := \rho_{T^*Q}^* \omega_Q$, where $\omega_Q$ is the canonical symplectic structure on $T^*Q$,
\item[$\ast$] $\alpha_L := d\mathcal{E}_L$, with $\mathcal{E}_L : \bbT Q \lra \rr$ defined as $\mathcal{E}_L(v_q,\alpha_q) := \alpha_q(v_q) - L(v_q).$
\end{itemize}

Explicitly, the Dirac structure $D_\Delta$ is described as
$$D_\Delta(z) = \{ w_z \oplus \beta_z \in T_z \bbT Q \oplus T_z^* \bbT Q \mid w_z \in \Delta(z), \ \beta_z - i_{w_z}\omega_P \in \Delta^\circ(z) \}.$$

In canonical coordinates,
$$\alpha_L = - \frac{\partial L}{\partial q} \ dq + \left( p - \frac{\partial L}{\partial v} \right) \ dv + v \ dp$$
and condition (\ref{Dirac-cont}) is equivalent to $(q,\dot{q}) \in \Delta_Q$ and
$$- \frac{\partial L}{\partial q} \ \delta q + \left( p - \frac{\partial L}{\partial v} \right) \ \delta v + v \ \delta p = \dot{q} \ \delta p - \dot{p} \ \delta q,$$
for all $(\delta q, \delta v, \delta p) \in \Delta(q,v,p)$, where
\begin{equation}\label{Delta(q,v,p)}
\Delta(q,v,p) = \{ (\delta q, \delta v, \delta p) \in T_{(q,v,p)} \bbT Q \mid \delta q \in \Delta_Q(q) \}.
\end{equation}

Therefore, the equations of $(Q,D_\Delta,\alpha_L)$ are
$$\dot{p} - \frac{\partial L}{\partial q} \in \Delta_Q^\circ, \quad p = \frac{\partial L}{\partial v}, \quad \dot{q} = v, \quad (q,\dot{q}) \in \Delta_Q,$$
which are precisely those of the original implicit Lagrangian system.
\end{example}

\begin{example}[Implicit Hamiltonian systems]
An implicit Hamiltonian system, as considered in \cite{Y-MII} (see also \cite{Leok-O}, Definition 2.7), consists of a Hamiltonian $H : T^*Q \lra \rr$, a constraint distribution $\Delta_Q \subs TQ$ and a vector field $X$ on $T^*Q$. In canonical coordinates, setting $X = \dot{q} \frac{\partial}{\partial q} + \dot{p} \frac{\partial}{\partial p}$, the trajectories of the system are characterized by the equations (\cite{Y-MII}, Proposition 3.14):
$$\dot{p} + \frac{\partial H}{\partial q} \in \Delta_Q^\circ, \quad \dot{q} = \frac{\partial H}{\partial p}, \quad (q,\dot{q}) \in \Delta_Q.$$

In the Dirac setting, given a Hamiltonian $H : T^*Q \lra \rr$ and a constraint distribution $\Delta_Q \subs TQ$, consider the Dirac system $(Q,D_\Delta,d\tilde{H})$ where $\tilde{H} := H \circ \pr_{T^*Q}$ and $D_\Delta$ is defined as in Example 1.

This time, condition (\ref{Dirac-cont}) reads
$$\left( \dot{p} + \frac{\partial H}{\partial q} \right) \ \delta q + \left( - \dot{q} + \frac{\partial H}{\partial p} \right) \ \delta p = 0$$
for all $(\delta q,\delta v,\delta p) \in \Delta(q,v,p)$ (where, as in Example 1, $\Delta(q,v,p)$ is given by (\ref{Delta(q,v,p)})), together with $(q,\delta q) \in \Delta_Q$. Therefore, the equations of the system are
$$\dot{p} + \frac{\partial H}{\partial q} \in \Delta_Q^\circ, \quad \dot{q} = \frac{\partial H}{\partial p}, \quad (q,\dot{q}) \in \Delta_Q,$$
which are the equations of the original implicit Hamiltonian system.
\end{example}

\section{Discrete Dirac systems}

We now turn our attention to the discrete setting, presenting a notion of `discrete Dirac system'. A discrete version of the implicit Lagrangian and Hamiltonian systems considered in \cite{Y-MI} and \cite{Y-MII} by H. Yoshimura and J. E. Marsden was presented in \cite{Leok-O} by M. Leok and T. Ohsawa. In their work, they introduce an object they call `discrete induced Dirac structure', which is a discrete analogue of the structures described in Theorem \ref{induced-Dirac}, but fails to be a Dirac structure in itself (it is not a subbundle of the corresponding Pontryagin bundle).

Our goal is then to present an alternative approach that makes use of actual Dirac structures. In order to do so, we will not consider implicit Lagrangian and Hamiltonian systems separately, but will construct a discrete analogue of (continuous) Dirac systems that contains both discrete Lagrangian and Hamiltonian systems as particular cases.

In discrete mechanics, one often works with maps whose domains are product manifolds, giving rise to two natural operators, namely $D_1$ and $D_2$. We briefly review this construction below, before using it in the following sections.

Given an $n$-dimensional manifold $Q$, we consider the product manifold $Q \times Q$ and, for $i = 1,2$, the projection $\pr_i : Q \times Q \lra Q$ onto the $i$-th factor. Using the product structure of $Q \times Q$, we have that
$$T(Q \times Q) \simeq  \pr_1^* (TQ) \oplus \pr_2^* (TQ),$$
where $\pr_i^* (TQ)$ denotes the pullback of the tangent bundle $TQ \lra Q$ over $Q \times Q$ by $\pr_i$ for $i=1,2$. If we define $j_1 : \pr_1^* (TQ) \lra T(Q \times Q)$ as $j_1(\delta q) := (\delta q,0)$, we have that $j_1$ is an isomorphism of vector bundles between $\pr_1^*  (TQ)$ and $TQ^{-} := \ker(T \pr_2) \subset T(Q \times Q)$. Similarly,
defining $j_2 : \pr_2^*  (TQ) \lra T(Q \times Q)$ as $j_2(\delta q) := (0,\delta q)$ identifies $\pr_2^* (TQ)$ with the subbundle
$TQ^{+} := \ker(T \pr_1) \subset T(Q \times Q)$. 

So, the decomposition $T(Q \times  Q) = TQ^{-} \oplus TQ^{+}$ leads to the decomposition
$$T^*(Q \times Q) = (TQ^{-})^{\circ} \oplus (TQ^{+})^{\circ}$$
and the natural identifications
$(TQ^{+})^{\circ} \simeq (TQ^{-})^{*} \simeq \pr_1^* T^{*}Q$ and 
$(TQ^{-})^{\circ} \simeq (TQ^{+})^{*} \simeq \pr_2^* T^{*}Q$.

For any smooth map
$f : Q \times Q \rightarrow X$, where $X$ is a smooth manifold, we define $D_1 f := Tf \circ j_1$ and $D_2 f := Tf \circ j_2$, where $Tf : T(Q \times Q) \lra TX$ denotes the tangent map of $f$. Thus,
$$Tf(q_0,q_1) (\delta q_0,\delta q_1) = D_1f(q_0,q_1)(\delta q_0) + D_2f(q_0,q_1)(\delta q_1).$$

In particular, if $f : Q \times Q \lra \rr$, then $D_1f(q_0,q_1) \in T^*_{q_0}
Q$ and $D_2f(q_0,q_1) \in T^*_{q_1}Q$.

Let $M$ be a smooth manifold. Given a natural number $N$, a {\it discrete curve} of length $N$ is a map $x. : \{ 0,\ldots,N \} \lra M$. Note that the space of discrete curves of length $N$ may be identified with the cartesian product $M^{N+1}$, so it has a (finite dimensional) smooth manifold structure.

\begin{definition}
Let $Q$ be a smooth manifold. We define its {\it discrete Pontryagin bundle} as
\begin{equation}\label{Pd}
\pp^d_Q := (Q \times Q) \times_Q T^*Q \simeq T^*Q \times Q,
\end{equation}
where we are considering the fiber bundles $\pr_1 : Q \times Q \lra Q$ and the cotangent bundle $\pi_Q : T^*Q \lra Q$.
\end{definition}

Intuitively, the first space in (\ref{Pd}) is more natural, since $Q \times Q$ is the discrete analogue of the tangent bundle\footnote{It is a well established idea to replace tangent vectors with close enough points in $Q$ when considering discrete-time dynamical systems.}, but we will use the latter because it is easier to work on a product manifold. The two natural projections are $\pr_{T^*Q} : \pp^d_Q \lra T^*Q$ and $\pr_Q : \pp^d_Q \lra Q$, given by $\pr_Q(\alpha_q,q^+) := q^+$.

We will always use the following notation unless explicitly stated otherwise: $x = (\alpha_q,q^+)$ will be a point in $\pp^d_Q$, with $\alpha_q \in T_q^*Q$ and $q^+ \in Q$, and $\delta x = (\delta \alpha_q,\delta q^+)$ will be a tangent vector to $\pp^d_Q$ at $x$, with $\delta \alpha_q \in T_{\alpha_q}T^*Q$ and $\delta q^+ \in T_{q^+}Q$. A discrete curve on $\pp^d_Q$ will therefore be denoted $x. = (\alpha_{q.},q^+.)$. Notice that the subindexes are used to indicate the position in a given path and have nothing to do with coordinates.

We will restrict our attention to the discrete curves on $\pp^d_Q$ that satisfy a certain `second order condition'. We say that a discrete curve $x.$ is {\itshape admissible} if
$$(x_k,x_{k+1}) = ((\alpha_{q_k},q_k^+),(\alpha_{q_{k+1}},q_{k+1}^+))$$ satisfies $q_k^+ = q_{k+1}$ for all $k = 0,\ldots,N-1$. This means that $x_k = (\alpha_{q_k},q_{k+1})$ for all $k$.

We denote the space of admissible discrete curves on $\pp^d_Q$ of length $N$ by $\calC_d(\pp^d_Q)$.

For our definition of discrete Dirac system we will use $1$-forms defined on $\calC_d(\pp^d_Q)$, so we first check that this makes sense.

\begin{lemma}
$\calC_d(\pp^d_Q)$ is a regular submanifold of $\pp^d_Q \times \ldots \times \pp^d_Q$.
\end{lemma}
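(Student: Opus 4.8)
The plan is to realize $\calC_d(\pp^d_Q)$ as the preimage of a diagonal under a submersion, and then invoke the standard fact that such a preimage is a regular submanifold. Write $n := \dim Q$ and introduce the projection $\sigma := \pi_Q \circ \pr_{T^*Q} : \pp^d_Q \lra Q$, which sends $x = (\alpha_q, q^+)$ to the base point $q$ of its covector. With this notation the admissibility condition $q_k^+ = q_{k+1}$ is exactly $\pr_Q(x_k) = \sigma(x_{k+1})$ for $k = 0, \ldots, N-1$. I would therefore define
$$\Psi : \pp^d_Q \times \cdots \times \pp^d_Q \lra Q^N \times Q^N, \qquad \Psi(x_0, \ldots, x_N) := \big( (\pr_Q(x_k))_{k=0}^{N-1} , (\sigma(x_{k+1}))_{k=0}^{N-1} \big),$$
so that $\calC_d(\pp^d_Q) = \Psi^{-1}(\Delta)$, where $\Delta \subs Q^N \times Q^N$ is the (closed, embedded, codimension $nN$) diagonal. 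It then suffices to show that $\Psi$ is a submersion, since a submersion is transverse to every submanifold.

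The heart of the argument, and the step I expect to be the only real obstacle, is this submersion claim. A tangent vector to the $k$-th factor has the form $\delta x_k = (\delta \alpha_{q_k}, \delta q_k^+)$, and the two projections read off complementary pieces of it: $T\pr_Q(\delta x_k) = \delta q_k^+$, while $T\sigma(\delta x_k) = T\pi_Q(\delta \alpha_{q_k})$. Since $\pi_Q : T^*Q \lra Q$ is a submersion, the latter sweeps out all of $T_{q_k}Q$ as $\delta \alpha_{q_k}$ varies, independently of the free choice of $\delta q_k^+ \in T_{q_k^+}Q$. I would then exploit the block structure of $T\Psi$: each target slot is fed by a single factor, the $k$-th slot of the first block only by $x_k$ (through $\pr_Q$) and the $k$-th slot of the second block only by $x_{k+1}$ (through $\sigma$), while each intermediate factor $x_k$ surjects jointly onto its two slots by the previous remark (and $x_0$, $x_N$ surject onto their single slots). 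Hence $T\Psi$ is onto at every point, so $\Psi$ is a \emph{submersion}.

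Granting this, the preimage theorem (a submersion being transverse to any submanifold) yields at once that $\calC_d(\pp^d_Q) = \Psi^{-1}(\Delta)$ is a regular submanifold of codimension $nN$, hence of dimension $n(2N+3)$; everything after the submersion check is formal. As a consistency check, and a useful byproduct, I would note that one can also see the submanifold structure directly: permuting the factors of $(\pp^d_Q)^{N+1} \simeq (T^*Q \times Q)^{N+1}$ to separate the covectors from the base points, the admissibility equations $q_k^+ = \pi_Q(\alpha_{q_{k+1}})$, for $k = 0, \ldots, N-1$, express $q_0^+, \ldots, q_{N-1}^+$ as a smooth function of $(\alpha_{q_0}, \ldots, \alpha_{q_N})$. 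Thus $\calC_d(\pp^d_Q)$ is the graph of a smooth map $(T^*Q)^{N+1} \times Q \lra Q^N$, which both confirms the regular-submanifold property and identifies $\calC_d(\pp^d_Q) \simeq (T^*Q)^{N+1} \times Q$, consistent with the dimension count.
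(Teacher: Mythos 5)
Your argument is correct and is essentially the paper's own proof: both realize $\calC_d(\pp^d_Q)$ as the preimage of a diagonal under a submersion built from the projections of $\pp^d_Q$ onto $Q$ (the paper maps to $(Q\times Q)^{N+1}$ and pulls back $Q\times\Delta(Q)\times\cdots\times\Delta(Q)\times Q$, while you map only the relevant components to $Q^N\times Q^N$ and pull back the full diagonal). Your explicit verification that the map is a submersion, and the graph description $\calC_d(\pp^d_Q)\simeq (T^*Q)^{N+1}\times Q$, are correct refinements of details the paper leaves implicit.
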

\begin{proof}
Let us define a map $\phi : (\pp^d_Q)^{N+1} \lra (Q \times Q)^{N+1}$, where $(\pp^d_Q)^{N+1}$ means $\pp^d_Q \times \ldots \times \pp^d_Q$ N+1 times and similarly for $(Q \times Q)^{N+1}$, by
$$\phi((\alpha_{q_0},q_0^+),\ldots,(\alpha_{q_N},q_N^+)) := ((q_0,q_0^+),\ldots,(q_N,q_N^+)).$$

Notice that $\phi$ is a submersion and $\calC_d(\pp^d_Q)$ can be regarded as
$$\calC_d(\pp^d_Q) = \phi^{-1}(Q \times \underbrace{\Delta(Q) \times \ldots \times \Delta(Q)}_{N \text{ times}} \times Q),$$
where $\Delta(Q) \subs Q \times Q$ is the diagonal, which is a regular submanifold. Therefore, $Q \times \Delta(Q) \times \ldots \times \Delta(Q) \times Q$ is a regular submanifold and $\calC_d(\pp^d_Q)$ is its nonempty preimage by a submersion.
\end{proof}

For the rest of this article, we will use the following notation: since
$$T_{x.}(\pp^d_Q \times \ldots \times \pp^d_Q) \simeq T_{x_0}\pp^d_Q \oplus \ldots \oplus T_{x_N}\pp^d_Q,$$
a $1$-form $\psi$ on $\pp^d_Q \times \ldots \times \pp^d_Q$ can be regarded as a sum $\psi = \psi_0 + \ldots + \psi_N$, with $\psi_j : (\pp^d_Q)^{N+1} \lra T^*\pp^d_Q$ such that $\psi_j(x_0,\ldots,x_N) \in T^*_{x_j} \pp^d_Q$. Therefore, we will denote $\psi_k(x.) := \psi(x.,k)$.

\begin{definition}
A {\it discrete Dirac system} is given by $(Q,D,\calD,\psi)$, where $Q$ is a smooth manifold, $D$ is a Dirac structure on $\pp^d_Q$, $\calD \subs Q \times Q$ is a submanifold and $\psi$ is a $1$-form on $\calC_d(\pp^d_Q)$.
\end{definition}

\begin{definition}
A discrete curve $x. = (\alpha_{q.},q^+.) \in \calC_d(\pp^d_Q)$ on $\pp^d_Q$ is a {\it trajectory} of the system if it satisfies $(q_k,q_{k+1}) \in \calD$ together with
\begin{equation}\label{Dirac-disc}
\left( \ver_{\alpha_{q_k}}(\alpha_{q_k}) , 0 \right) \oplus \psi_k(x.) \in D(x_k), \quad 0 \le k \le N-1
\end{equation}
where $\ver$ is the vertical lift on $T^*Q$ given by
$$\ver_{\alpha_q}(\beta_q) := \frac{d}{dt} \bigg|_{t=0} \alpha_q + t \beta_q \in T_{\alpha_q}T^*Q.$$
\end{definition}

In local coordinates, equation (\ref{Dirac-disc}) is simply
$$(0,p_k,0) \oplus \psi_k(x.) \in D(x_k), \quad \quad 0 \le k \le N-1.$$

In the following pages we will see how this formulation contains both discrete Lagrangian and Hamiltonian systems. To this end, we will consider the $2$-form $\omega_P^d := -\pr_{T^*Q}^* \omega_Q$ on $\pp^d_Q$, where $\omega_Q$ is the canonical symplectic form on $T^*Q$.

In canonical coordinates, given $\delta x_k := (\delta \alpha_{q_k} , \delta q_k^+) \in T_{x_k} \pp^d_Q$,
$$\omega_P^d(x_k) \left( \left( \ver_{\alpha_{q_k}}(\alpha_{q_k}) , 0 \right) , \delta x_k \right) = -\omega_Q(q_k,p_k)((0,p_k) , (\delta q_k,\delta p_k)) = p_k \cdot \delta q_k.$$

\subsection{Discrete Lagrangian systems}\label{section-DLS}

A well known type of discrete-time constrained mechanical system is the family of nonholonomic discrete mechanical systems (see \cite{C-M} and \cite{F-T-Z}). One such system consists of $(Q,L_d,\Delta_Q,\calD)$ where $Q$ is a smooth manifold, $L_d : Q \times Q \lra \rr$ is a smooth function, $\Delta_Q$ is a subbundle of $TQ$ and $\calD \subs Q \times Q$ is a submanifold. The trajectories of these systems are characterized by the following equations:
\begin{equation}\label{NHDMS}
D_1 L_d(q_k,q_{k+1}) + D_2 L_d(q_{k-1},q_k) \in \Delta_Q^\circ(q_k), \quad (q_k,q_{k+1}) \in \calD.
\end{equation}

We can present a nonholonomic discrete mechanical system as a discrete Dirac system as follows: given a discrete Lagrangian $L_d : Q \times Q \lra \rr$, a constraint distribution $\Delta_Q$ and a submanifold $\calD \subs Q \times Q$, consider the discrete Dirac system $(Q,D_\Delta,\calD,\psi_L)$, where $D_\Delta := D(\Delta,\omega_P^d)$ is the Dirac structure induced by $\omega_P^d$ and $\Delta := (T(\pi_Q \circ \pr_{T^*Q}))^{-1}(\Delta_Q)$, and $\psi_L$ is defined as
$$\psi_L(x.,k) := - D_1 L_d(q_k,q_{k+1}) + (\alpha_{q_{k+1}} - D_2 L_d(q_k,q_{k+1})).$$

The distribution $\Delta$ is given explicitly by
$$\Delta(x) = \{ (\delta \alpha_q,\delta q^+) \in T_x\pp^d_Q \mid \delta q := T\pi_Q (\delta \alpha_q) \in \Delta_Q(q) \}.$$

In canonical coordinates, condition (\ref{Dirac-disc}) is equivalent to $(0,p_k,0) \in \Delta(x_k)$ together with
$$p_k \cdot \delta q_k = -D_1 L_d(q_k,q_k^+) \cdot \delta q_k + \left( p_{k+1} - D_2 L_d(q_k,q_k^+) \right) \cdot \delta q_k^+,$$
for all $\delta x_k = (\delta q_k,\delta p_k,\delta q_k^+) \in \Delta(x_k)$. That is,
$$(p_k + D_1L_d(q_k,q_k^+)) \cdot \delta q_k - (p_{k+1} - D_2L_d(q_k,q_k^+)) \cdot \delta q_k^+ = 0,$$
for all $\delta x_k \in \Delta(x_k)$, and $(0,p_k,0) \in \Delta(x_k)$, which is equivalent to
$$T\pi_Q \cdot (q_k,p_k,0,p_k) = (q_k,0) \in \Delta_Q,$$
and is trivially satisfied, because $\Delta_Q(q_k)$ is vector subspace of $T_{q_k}Q$.

Therefore, the equations of the system are
\begin{eqnarray*}
	p_{k+1} = D_2L_d(q_k,q_k^+), &\quad& p_k + D_1L_d(q_k,q_k^+) \in \Delta_Q^\circ (q_k) \\
	q_k^+ = q_{k+1}, &\quad& (q_k,q_{k+1}) \in \calD.
\end{eqnarray*}

That is,
\begin{equation}\label{Lagrange-d}
p_{k+1} = D_2L_d(q_k,q_{k+1}), \quad p_k + D_1L_d(q_k,q_{k+1}) \in \Delta_Q^\circ (q_k), \quad (q_k,q_{k+1}) \in \calD.
\end{equation}

Comparison of (\ref{Lagrange-d}) with (\ref{NHDMS}) proves the following:

\begin{prop}
The trajectories of the nonholonomic discrete mechanical system\\ $(Q,L_d,\Delta_Q,\calD)$ correspond to those of the discrete Dirac system $(Q,D_\Delta,\calD,\psi_L)$.
\end{prop}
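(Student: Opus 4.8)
The plan is to prove the statement by comparing the two families of equations directly, since the equations (\ref{Lagrange-d}) governing the discrete Dirac system $(Q,D_\Delta,\calD,\psi_L)$ have already been derived from condition (\ref{Dirac-disc}) in the preceding computation. The first thing I would make precise is what ``correspond'' means here: a trajectory of the discrete Dirac system is an admissible curve $x. = (\alpha_{q.},q^+.)$ on $\pp^d_Q$, whereas a trajectory of the nonholonomic system is a discrete curve on $Q$. So the correspondence should be realized by projecting a Dirac trajectory onto its configuration curve $q_0,\ldots,q_N$ (where $q_k$ is the base point of $\alpha_{q_k}$, and admissibility gives $q_k^+ = q_{k+1}$), and conversely by lifting a nonholonomic solution back to $\pp^d_Q$ through the momenta.

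First I would take a trajectory $x.$ of the discrete Dirac system and show that its projection $q.$ solves (\ref{NHDMS}). Writing $p_k := \alpha_{q_k}$ in canonical coordinates, equations (\ref{Lagrange-d}) hold for $0 \le k \le N-1$. The key step is to eliminate the momenta: the relation $p_{k+1} = D_2 L_d(q_k,q_{k+1})$ read at index $k-1$ gives $p_k = D_2 L_d(q_{k-1},q_k)$, which I substitute into the annihilator condition $p_k + D_1 L_d(q_k,q_{k+1}) \in \Delta_Q^\circ(q_k)$ at index $k$. This produces $D_2 L_d(q_{k-1},q_k) + D_1 L_d(q_k,q_{k+1}) \in \Delta_Q^\circ(q_k)$ for $1 \le k \le N-1$, which together with $(q_k,q_{k+1}) \in \calD$ is exactly (\ref{NHDMS}). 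This is just an index shift recombining the momentum-matching and force-balance halves of (\ref{Lagrange-d}) into the single discrete Euler--Lagrange expression.

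Conversely, given a solution $q.$ of (\ref{NHDMS}), I would build an admissible curve $x.$ by setting $q_k^+ := q_{k+1}$ and defining the momenta via $\alpha_{q_k} = p_k := D_2 L_d(q_{k-1},q_k)$ for $1 \le k \le N$; this makes the first equation of (\ref{Lagrange-d}) hold automatically, and the annihilator condition then reduces to (\ref{NHDMS}), which holds by hypothesis. The main obstacle I anticipate is not analytical but a matter of careful bookkeeping at the endpoints: the initial momentum $p_0$ is not pinned down by any momentum-matching relation (it enters only through the annihilator condition at $k=0$), the terminal datum $q_N^+$ is free, and $p_N$ is determined but carries no companion force-balance condition. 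This is the familiar boundary indeterminacy of discrete mechanics, so I expect the correspondence to be a clean match of the interior equations together with this standard endpoint freedom. Once both directions are checked, the proposition follows from the already-established equivalence between (\ref{Dirac-disc}) and (\ref{Lagrange-d}).
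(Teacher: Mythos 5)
Your proposal is correct and follows essentially the same route as the paper, which establishes the proposition by deriving equations (\ref{Lagrange-d}) from the Dirac condition (\ref{Dirac-disc}) and then simply comparing them with (\ref{NHDMS}). You usefully spell out what the paper leaves implicit in that comparison---the index-shift elimination of the momenta $p_k = D_2L_d(q_{k-1},q_k)$ in one direction, the lifting of a configuration trajectory to $\pp^d_Q$ in the other, and the endpoint freedom in $p_0$ and $q_N^+$---but the underlying argument is the same.
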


\begin{remark}
If the submanifold $\calD$ and the distribution $\Delta_Q$ are related by means of a retraction on $Q$ (see \cite{Leok-O}, Section 4), then equations (\ref{Lagrange-d}) are equivalent to the $(+)$-discrete Lagrange--Dirac equations considered in \cite{Leok-O}, Section 5.1.
\end{remark}

\begin{remark}\label{initial-data}
Notice that without constraints, equation $p_0 + D_1L_d(q_0,q_1) = 0$ forces an initial condition $x_0 \in \pp^d_Q$ to be of the form $(-D_1L_d(q_0,q_1),q_1)$ if we expect it to give rise to a trajectory.
\end{remark}

\begin{example}
Let us consider the discrete harmonic oscillator described by the (unconstrained) discrete Lagrangian system $(\rr,L_d)$, where
$$L_d(q,q^+) := h \left[ \frac{1}{2} \left( \frac{q^+ - q}{h} \right)^2 - \frac{\lambda}{2} q^2 \right],$$
and $\lambda$ is a nonnegative constant.

The derivatives we will need are
$$D_1L_d(q,q^+) = - \frac{1}{h} (q^+ - q) - h \lambda q, \quad D_2L_d(q,q^+) = \frac{1}{h} (q^+ - q).$$

In the Dirac setting, we consider the system $(\rr,D,\rr \times \rr,\psi_L)$, where
$$\psi_L(x.,k) = \left( \frac{q_{k+1} - q_k}{h} + h \lambda q_k , 0 , p_{k+1} - \frac{q_{k+1} - q_k}{h} \right) \in T_{q_k}^*\rr \times T_{p_k}^*\rr \times T_{q_{k+1}}^*\rr$$
and
\begin{equation*}
\begin{split}
D(x) &= \{ v \oplus \alpha \mid \omega_P^d(v,w) = \alpha(w) \ \forall w \in T_x\pp^d_Q \} \\
&= \{ (\dot{q},\dot{p},q') \oplus \alpha \mid \dot{p} \delta q - \dot{q} \delta p = \alpha(\delta q, \delta p, \delta q^+) \ \forall (\delta q, \delta p, \delta q^+) \in T_x\pp^d_Q \}.
\end{split}
\end{equation*}

The initial data $(q_0,q_1) \in \rr \times \rr$ for the original Lagrangian system induces the initial data $x_0 = (q_0,p_0,q_1)$, with $(q_0,p_0) = -D_1L_d(q_0,q_1)$ (see Remark \ref{initial-data}). Let us construct a trajectory $(x_0,x_1)$.

The condition (\ref{Dirac-disc}) for $k=0$ is $(0,p_0,0) \oplus \psi_L(x.,0) \in D(x_0)$, that is,
$$p_0 \ \delta q_0 = \left( \frac{q_1 - q_0}{h} + h \lambda q_0 \right) \ \delta q_0 + \left( p_1 - \frac{q_1 - q_0}{h} \right) \ \delta q_1,$$
for all $(\delta q_0,\delta p_0, \delta q_1) \in T_{x_0}\pp^d_\rr$. This yields
$$p_0 = \frac{q_1 - q_0}{h} + h \lambda q_0, \quad p_1 = \frac{q_1 - q_0}{h}.$$

The first equation is the restriction regarding the initial data that we discussed in Remark \ref{initial-data}. Iterating, when $k=1$ we have $(0,p_1,0) \oplus \psi_L(x.,1) \in D(x_1)$:
$$\frac{q_1 - q_0}{h} \ \delta q_1 = \left( \frac{q_2 - q_1}{h} + h \lambda q_1 \right) \ \delta q_1 + \left( p_2 - \frac{q_2 - q_1}{h} \right) \ \delta q_2,$$
for all $(\delta q_1,\delta p_1, \delta q_2) \in T_{x_1}\pp^d_\rr$. This leads to
$$q_2 = 2 q_1 - q_0 + h^2 \lambda q_1, \quad p_2 = \frac{q_2 - q_1}{h}.$$

Therefore, the trajectory $(x_0,x_1)$ is
$$(x_0,x_1) = \left( \left( q_0 , \frac{q_1 - q_0}{h} + h \lambda q_0 , q_1 \right) , \left( q_1 , \frac{q_1 - q_0}{h} , 2 q_1 - q_0 - h^2 \lambda q_1 \right) \right).$$
\end{example}

\subsection{Discrete Hamiltonian systems}

In \cite{Leok-Z}, the authors introduce the notion of discrete Hamiltonian mechanical system from a variational viewpoint. Such a system is given by a vector space $Q$ and a discrete Hamiltonian function $H_d : T^*Q \simeq Q \times Q^* \lra \rr$. Its trajectories are characterized by the `discrete right Hamilton's equations'\footnote{We are omitting the adjective `right' everywhere else, since we are using $H_d$ instead of the original $H_d^+$ considered in \cite{Leok-Z}.}:
\begin{equation}\label{dHMS}
q_{k+1} = D_2H_d(q_k,p_{k+1}), \quad p_k = D_1H_d(q_k,p_{k+1}).
\end{equation}

In the Dirac setting, given a vector space $Q$, a discrete Hamiltonian $H_d : Q \times Q^* \lra \rr$, a constraint distribution $\Delta_Q \subs TQ$ and a submanifold $\calD \subs Q \times Q$, consider the system $(Q,D_\Delta,\calD,\psi_H)$, where $D_\Delta := D(\Delta,\omega_P^d)$ is, as in the Lagrangian systems, the Dirac structure induced by $\omega_P^d$ and $\Delta := (T(\pi_Q \circ \pr_{T^*Q}))^{-1}(\Delta_Q)$, and the $1$-form $\psi_H$ is given in global canonical coordinates by
$$\psi_H \left( x. , k \right) := \frac{\partial H_d}{\partial q}(q_k,p_{k+1}) \ dq_k + \left( \frac{\partial H_d}{\partial p}(q_k,p_{k+1}) - q_{k+1} \right) \ dp_k.$$

In these coordinates, equation (\ref{Dirac-disc}) is equivalent to $(0,p_k,0) \in \Delta(x_k)$ (which is trivially satisfied, as in Section \ref{section-DLS}) and
$$p_k \cdot \delta q_k = \frac{\partial H_d}{\partial q}(q_k,p_{k+1}) \cdot \delta q_k + \left( \frac{\partial H_d}{\partial p}(q_k,p_{k+1}) - q_{k+1} \right) \cdot \delta p_k,$$
for all $\delta x_k = (\delta q_k,\delta p_k,\delta q_{k+1}) \in \Delta(x)$. That is,
$$\left( p_k - \frac{\partial H_d}{\partial q}(q_k,p_{k+1}) \right) \cdot \delta q_k - \left( \frac{\partial H_d}{\partial p}(q_k,p_{k+1}) - q_{k+1} \right) \cdot \delta p_k = 0$$
for all $\delta x_k \in \Delta(x_k)$.

Therefore, the equations of the system are
\begin{equation}\label{Hamilton-d}
p_k - \frac{\partial H_d}{\partial q}(q_k,p_{k+1}) \in \Delta_Q^\circ(q_k), \quad q_{k+1} = \frac{\partial H_d}{\partial p}(q_k,p_{k+1}), \quad (q_k,q_{k+1}) \in \calD.
\end{equation}

If $\Delta_Q = TQ$ and $\calD = Q \times Q$, comparing equations (\ref{Hamilton-d}) and (\ref{dHMS}) proves the following result:

\begin{prop}
The trajectories of the (unconstrained) discrete Hamiltonian system $(Q,H_d)$ correspond to those of the discrete Dirac system $\left( Q,D_{T \pp^d_Q},Q \times Q, \psi_H \right)$.
\end{prop}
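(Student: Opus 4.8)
The plan is to obtain the proposition as a specialization of the constrained computation carried out immediately above, rather than repeating any coordinate analysis. The general discrete Dirac system $(Q,D_\Delta,\calD,\psi_H)$, with an arbitrary distribution $\Delta_Q$ and submanifold $\calD$, has already been shown to have trajectory equations (\ref{Hamilton-d}). So the first step is simply to set $\Delta_Q = TQ$ and $\calD = Q \times Q$ in that system and track how each piece of the data and each of the three equations collapses.

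Second, I would check that under this choice the Dirac structure is indeed the one named in the statement. By definition $\Delta = (T(\pi_Q \circ \pr_{T^*Q}))^{-1}(\Delta_Q)$, and taking $\Delta_Q = TQ$ makes the preimage the full tangent bundle $T\pp^d_Q$; hence $D_\Delta = D(T\pp^d_Q,\omega_P^d) = D_{T\pp^d_Q}$, matching the notation of the proposition. This identification is the one genuinely bookkeeping-flavored point, so I would state it explicitly.

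Finally, I would specialize the three equations in (\ref{Hamilton-d}). Since $\Delta_Q = TQ$ we have $\Delta_Q^\circ(q_k) = \{0\}$, so the membership $p_k - \frac{\partial H_d}{\partial q}(q_k,p_{k+1}) \in \Delta_Q^\circ(q_k)$ becomes the equality $p_k = \frac{\partial H_d}{\partial q}(q_k,p_{k+1})$; the condition $(q_k,q_{k+1}) \in \calD = Q \times Q$ is vacuous; and $\frac{\partial H_d}{\partial q}$, $\frac{\partial H_d}{\partial p}$ are exactly $D_1 H_d$, $D_2 H_d$ by the definition of $D_1$, $D_2$ for functions on a product manifold. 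The surviving equations are then $p_k = D_1 H_d(q_k,p_{k+1})$ and $q_{k+1} = D_2 H_d(q_k,p_{k+1})$, which are precisely (\ref{dHMS}). Since there is no analytic content beyond the already-established derivation of (\ref{Hamilton-d}), I do not expect any serious obstacle; the only care needed is the notational identification of the Dirac structure and of the partial derivatives with the $D_i$ operators.
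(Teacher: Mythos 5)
Your proposal is correct and follows essentially the same route as the paper, which likewise obtains the result by specializing the already-derived equations (\ref{Hamilton-d}) to $\Delta_Q = TQ$ and $\calD = Q \times Q$ and comparing with (\ref{dHMS}); you merely spell out the bookkeeping (the vanishing annihilator, the vacuous constraint, and the identification $D_\Delta = D_{T\pp^d_Q}$) that the paper leaves implicit.
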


\begin{remark}
As before, if $\calD$ and $\Delta_Q$ are linked via a retraction on $Q$, equations (\ref{Hamilton-d}) are equivalent to the $(+)$-discrete nonholonomic Hamilton's equations considered in \cite{Leok-O}.
\end{remark}

\begin{remark}
If $\Delta_Q = TQ$ and $\calD = Q \times Q$, under certain regularity conditions, namely
$$\frac{\partial^2 H_d}{\partial q \partial p}(q,p)$$
being nonsingular (see \cite{Leok-Z}), in a neighbourhood of a solution $(\bar{q}_0,\bar{p}_0,\bar{p}_1)$, equation $p_0 = \frac{\partial H_d}{\partial q}(q_0,p_1)$ determines implicitly the value $p_1$ in terms of $q_0$ and $p_0$ and, afterwards, $q_1 = \frac{\partial H_d}{\partial p}(q_0,p_1)$ determines $q_1$. Schematically, $(q_0,p_0) \dashrightarrow p_1 \dashrightarrow q_1$, meaning that, under certain conditions, $q_1$ is a function of $(q_0,p_0)$ and, hence, cannot be arbitrary. This says, again, that not every choice of $x_0 = (q_0,p_0,q_1)$ gives rise to a trajectory.
\end{remark}

\section*{Future work}

\begin{itemize}
\item[$\ast$] The dynamics of implicit Lagrangian and Hamiltonian systems can be obtained from a variational principle, as it is shown in \cite{Y-MII}. For the case of discrete Lagrange--Dirac and nonholonomic Hamiltonian systems considered in \cite{Leok-O}, the authors show that their dynamics can also be derived using variational techniques. We are interested in studying if it is possible to do the same for the discrete Dirac systems that we considered here.

\item[$\ast$] Under some regularity condition, the existence of flows near a given trajectory is a well known fact for both discrete Lagrangian and Hamiltonian systems. We would like to explore the possibility of a similar result in the context of the discrete Dirac systems introduced here.

\item[$\ast$] In the continuous setting, reduction of Dirac structures and implicit Lagrangian and Hamiltonian systems is discussed, for example, in \cite{Y-M-Red} and \cite{Y-M-RedII}. We are interested in studying symmetries of discrete Dirac systems, and in constructing a reduction procedure in this new framework.
\end{itemize}

\providecommand{\bysame}{\leavevmode\hbox to3em{\hrulefill}\thinspace}
\providecommand{\MR}{\relax\ifhmode\unskip\space\fi MR }
\providecommand{\MRhref}[2]{%
  \href{http://www.ams.org/mathscinet-getitem?mr=#1}{#2}
}
\providecommand{\href}[2]{#2}


\end{document}